\newcommand{\Z}{\mathbb{Z}}
\newcommand{\p}{\mathbb{P}}
\newcommand{\dd}{\mathrm{d}}
\newcommand{\im}{\mathrm{i}}
\newcommand{\sds}{\mathrm{SDS}}
\theoremstyle{plain}
\newtheorem{theorem}{Theorem}[section]
\newtheorem{lemma}[theorem]{Lemma}
\theoremstyle{definition}
\begin{document}

\title{On discrete approximations of stable distributions}
\renewcommand\Authands{ and }

\author[a,b]{Lenka Sl\'amov\'a \thanks{Corresponding author; Email: \texttt{slamova.lenka@gmail.com};}}
\author[a]{Lev B. Klebanov \thanks{Email: \texttt{levbkl@gmail.com}}}

\affil[a]{\small{\textit{Department of Probability and Mathematical Statistics, Charles University in Prague, Czech Republic}}}
\affil[b]{\small{\textit{Department of Applied Mathematics and Statistics, Stony Brook University, NY, USA}}}

\maketitle

\abstract{In some fields of applications of stable distributions, especially in economics, it appears, that data have distributions similar to stable in a large region, but do not have such heavy tails. Our aim in this note is to propose several methods of approximation of stable distributions by some discrete distributions, which may have different tail behavior. In a sense the introduced distributions form an alternative to tempered stable distributions that combine Gaussian and stable behavior.}

\section{Introduction}
Stable distributions are on the rise in financial applications since \cite{mandelbrot} noted that Gaussian distribution does not provide a good fit for financial returns that exhibit leptokurtic behavior and heavy tails. However the infinite variance of stable distributions and the fact that financial returns have heavier tails on a short time scale and almost Gaussian on a long scale brings into question the appropriateness of the stable model of returns. \cite{grabchak} studied this paradox and suggested that a more appropriate model for financial returns has tempered heavy tails. They show, using the pre-limit theorem by \cite{klebanov_prelimit}, that the sum of a large number of independent and identically distributed random variables behave as a stable random variable even though the tails of the random variables are not heavy. 

Stable distributions with exponentially tempered tails have been considered in the literature under different names - truncated L\'evy flights (\cite{koponen}), CGMY model (\cite{carr}) and finally tempered stable distributions (\cite{rosinski}). Tempered stable distributions appear by exponential tilting of the L\'evy measure of stable distributions. The resulting distributions have finite moments of any order and exponential tails.

In this note, we focus on discrete approximations of stable distributions both with heavy tails and truncated heavy tails and therefore offer an alternative to the stable and tempered stable distributions. In the second and third section, we introduce two approximations of the stable characteristic function leading to discrete distributions that were introduced in \cite{slamova2012}. Also we study two different approximations with Gaussian tails, appearing as a result of truncation and tempering the heavy tails of the discrete stable distributions. In the fourth section, we study a discrete approximation resulting from discretizing the L\'evy measure of stable distributions. We obtain a discrete distribution that allow for the index of stability to be an arbitrary positive number. 

\section{First approximation}
Let us first consider the case of symmetric $\alpha$-stable distributions. Their characteristic functions are given by the following formula $$f(t) = \exp\left\{-\sigma^{\alpha} |t|^{\alpha}\right\},$$ with $\alpha \in (0,2]$ being the index of stability and $\sigma > 0$ being the scaling parameter. For arbitrary $t$ we write
$|t|^{\alpha} = (t^2)^{\gamma},$ where $\gamma = \alpha/2$. Let us use the following approximation. We have $$t^2 = \lim_{a \to 0} \frac{2}{a^2} (1-\cos(at)),$$ therefore let us write $t^2 \sim \frac{2}{a^2} (1-\cos(at)),$ as $a \to 0$. Hence the characteristic function of symmetric $\alpha$-stable distribution can be approximated as
\begin{align*}
\log f(t) & = -\sigma^{\alpha} |t|^{\alpha} \sim \log g(t,a) = - \sigma^{2\gamma} \frac{2^\gamma}{a^{2\gamma}} (1-\cos(at))^{\gamma},
\end{align*} 
for small values of $a$.

\begin{lemma}
The function $$g(t,a) = \exp\left\{- \sigma^{2 \gamma} \frac{2^\gamma}{a^{2\gamma}} (1-\cos(at))^{\gamma}\right\}$$ is a characteristic function of a distribution given on the lattice $a\Z = \{0,\pm a, \pm 2a,\dots\}$ for any positive $a$.
\end{lemma}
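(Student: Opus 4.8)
The plan is to recognize $g(t,a)$ as a mixture of compound Poisson characteristic functions supported on $a\Z$, using the classical subordination representation of $\exp(-u^\gamma)$.

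First I would dispose of the easy structural requirements. A $(2\pi/a)$-periodic function $g$ with $g(0,a)=1$ is the characteristic function of a probability distribution on $a\Z$ precisely when it is positive definite, in which case the masses are recovered from $p_k = \frac{a}{2\pi}\int_{-\pi/a}^{\pi/a} g(t,a)\,e^{-\im a k t}\,\dd t$. Here periodicity and normalization are immediate: $1-\cos(at)$ has period $2\pi/a$ and vanishes at $t=0$, so $g(\cdot,a)$ inherits both. Thus the whole content of the lemma is the positive definiteness of $g$, and I would establish it not by estimating the coefficients $p_k$ directly but by exhibiting $g$ as a genuine characteristic function.

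The key step is an algebraic rewriting followed by subordination. Writing $u=u(t)=\frac{2\sigma^2}{a^2}\bigl(1-\cos(at)\bigr)\ge 0$, we have $g(t,a)=\exp(-u^\gamma)$ with $\gamma=\alpha/2\in(0,1]$. Since $v\mapsto e^{-v^\gamma}$ is completely monotone on $[0,\infty)$, being the Laplace transform of the positive $\gamma$-stable law $\eta_\gamma$ (a probability measure on $[0,\infty)$, degenerating to the Dirac mass at $1$ when $\gamma=1$), we may write $g(t,a)=\int_0^\infty e^{-su}\,\eta_\gamma(\dd s)$. For each fixed $s>0$ the integrand equals $\exp\bigl\{\lambda_s(\cos(at)-1)\bigr\}$ with $\lambda_s=\frac{2\sigma^2 s}{a^2}>0$, which is the characteristic function of a compound Poisson variable with Poisson rate $\lambda_s$ and symmetric jumps equal to $\pm a$ with probability $1/2$ each; this law lives on $a\Z$. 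Hence $g(\cdot,a)$ is a mixture, with mixing measure $\eta_\gamma$, of characteristic functions of distributions on $a\Z$. Since a mixture of characteristic functions is again a characteristic function, and since every component is carried by $a\Z$, so is the mixture, which is exactly the claim.

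The only substantive ingredient is the subordination fact that $e^{-u^\gamma}$ is a positive-stable Laplace transform; this is precisely what absorbs the non-integer exponent $\gamma$, and I expect it to be the main obstacle, in the sense that a head-on verification of $p_k\ge 0$ from the integral formula looks considerably messier. The boundary case $\gamma=1$ (that is, $\alpha=2$) needs no subordinator, as $g(t,a)=e^{-u}$ is then already a compound Poisson characteristic function on $a\Z$.
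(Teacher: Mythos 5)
Your proof is correct, but it takes a genuinely different route from the paper's. The paper works entirely with the binomial series: it writes $g(t,a)=\exp\{-\lambda(1-h(t,a))\}$ with $h(t,a)=1-(1-\cos(at))^{\gamma}=\sum_{k\ge 1}\binom{\gamma}{k}(-1)^{k-1}\cos(at)^{k}$, observes that for $\gamma\in(0,1]$ the coefficients $\binom{\gamma}{k}(-1)^{k-1}$ are nonnegative and sum to $1$, so that $h(\cdot,a)$ is a convex combination of the characteristic functions $\cos(at)^{k}$ of $a\Z$-valued variables, and concludes that $g$ is the characteristic function of a compound Poisson law on $a\Z$ with intensity $\lambda=\sigma^{2\gamma}2^{\gamma}/a^{2\gamma}$ and jump characteristic function $h$. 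You instead set $u(t)=\frac{2\sigma^2}{a^2}(1-\cos(at))$ and use complete monotonicity of $v\mapsto e^{-v^{\gamma}}$ (Bernstein's theorem, realized by the positive $\gamma$-stable law $\eta_\gamma$) to write $g(t,a)=\int_0^\infty e^{-s\,u(t)}\,\eta_\gamma(\dd s)$ as a mixture of compound Poisson characteristic functions with simple $\pm a$ jumps. Both arguments are sound and both use $\gamma\le 1$ in an essential way (sign of the binomial coefficients in one case, complete monotonicity of $e^{-v^{\gamma}}$ in the other). The trade-off: your subordination argument is more conceptual and avoids any coefficient bookkeeping, but it invokes the nontrivial external fact that $e^{-v^{\gamma}}$ is a positive-stable Laplace transform; the paper's elementary expansion buys an explicit formula for the jump distribution, which the authors then exploit immediately afterwards to define the truncated variant $g(t,a,M)$ by cutting the very series you bypassed.
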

\begin{proof}
We can rewrite $g(t,a)$ as $g(t,a) = \exp\{-\lambda(1-h(t,a))\}$, where $$h(t,a) = 1- (1-\cos (at))^{\gamma} = \sum_{k=1}^{\infty} \binom{\gamma}{k} (-1)^{k-1} \cos(at)^k.$$ The series coefficients are positive for $\gamma \in (0,1]$, moreover $h(0,a) = 1$, $h(t,a)$ is periodic with period $2\pi a$, hence the function $h(t,a)$ is a characteristic function of a random variable on $a\Z$. Therefore $g(t,a)$ is a characteristic function of compound Poisson random variable with intensity of jumps $\lambda$ and jumps in $a\Z$ with characteristic function $h(t,a)$.
\end{proof}

It is clear that $$\lim_{a \to 0} g(t,a) = f(t),$$ and therefore $g(t,a)$ can be considered as discrete approximation of $f(t)$ for a sufficiently small $a$. This distribution with $a=1$ was introduced in \cite{slamova2012} by considering a discrete analogue of the stability property $X = n^{-1/\alpha} (X_1 + X_2 + \dots + X_n)$ and called symmetric discrete stable (SDS) distribution.

It is obvious from the construction of $\gamma$-symmetric discrete stable distribution that it belongs to the domain of normal attraction of $2\gamma$-stable distribution. From the known characterization of the domain of attraction of stable distributions (see, for example, \cite{ibragimov}), a $\sds$ random variable must satisfy the following tail assumptions as $x \to \infty$
\begin{equation}
\lim_{x \to \infty} x^{2 \gamma} \p(|X| > x) = \left\{
\begin{array}{ll} 
\lambda\frac{a^{2\gamma}}{2^{\gamma}}\frac{1}{\Gamma(1-2\gamma)\cos(\pi \gamma)} & \text{if  } \gamma \ne \frac{1}{2} \\ 
\lambda\frac{a^{2\gamma}}{2}\frac{2}{\pi} & \text{if  } \gamma = \frac{1}{2} 
\end{array}\right.
\end{equation}

So far we have introduced a discrete approximation of the symmetric $\alpha$-stable distribution that has the same tail behavior. Another approximation leading to a distribution with exponential tails can be obtained in the following way. Let us consider a function
$$ g(t,a, M) = \exp \left\{-\lambda \sum_{k=1}^{M}(-1)^k \binom{\gamma}{k} \cos(at)^k + \lambda \sum_{k=1}^{M}(-1)^k \binom{\gamma}{k} \right\}.$$ For sufficiently large values of $M$ this function can be considered an approximation of the function $g(t,a)$, as $$\lim_{M \to \infty} g(t,a,M) = g(t,a).$$ So it is also a discrete approximation of symmetric $\alpha$-stable distribution, as $$\lim_{a \to 0} \lim_{M \to \infty} g(t,a,M) = f(t).$$ However we cannot exchange the order of the limits. 

Both characteristic functions $g(t,a)$ and $g(t,a,M)$ are infinitely divisible as they correspond to compound Poisson distributions. The distributions of jumps are given by $h(t,a) = 1-(1-\cos(at))^{\gamma}$ and $h(t,a,M) = 1-\sum_{k=1}^M(-1)^k\binom{\gamma}{k}\cos(at)^k$ + $\sum_{k=1}^M(-1)^k\binom{\gamma}{k}$ respectively. It can be verified that these distributions have no mass at 0 and the second distribution has truncated jumps in absolute value larger than $aM$. We will therefore call the distribution given by characteristic function $g(t,a,M)$ truncated symmetric discrete stable distribution.
 
The characteristic function is an entire function hence the tails of the truncated symmetric discrete stable distribution behave like $o(\exp(-bx))$, as $x\to \infty$, for all $b>0$ by  the Raikov's theorem (\cite{linnik}). The truncated symmetric discrete stable distribution thus belongs to the domain of normal attraction of Gaussian distribution and as such has finite variance. It follows from the pre-limit theorem of \cite{klebanov_prelimit} that for not too large values of $n$ the sum $S_n  =n^{-1/2\gamma}(X_1 + \dots + X_n)$ behaves like symmetric $\alpha$-stable distribution with $\alpha = 2\gamma$. This property is due to the truncation of the bigger jumps, therefore the distribution behaves like stable distribution in the middle, and like Gaussian on the tails. 

\section{Second approximation}
In the previous section, we introduced a discrete approximation of symmetric $\alpha$-stable distribution. Here we give a discrete approximation of $\alpha$-stable distribution with index of stability $\alpha \in (0,1)$ and skewness $\beta \in [-1,1]$. The characteristic function of strictly $\alpha$-stable distribution with skewness parameter $\beta$ and scale parameter $\sigma>0$ is given by $$f(t) = \exp\left\{-\sigma^{\alpha}|t|^{\alpha}\left(1-\im \beta \mathrm{sign}(t) \tan \frac{\pi \alpha}{2}\right)\right\}.$$ We can rewrite this as 
\begin{align*}
\log f(t) = - \lambda_1 (-it)^{\alpha} -\lambda_2(it)^{\alpha},
\intertext{where}
\lambda_1 = \frac{\sigma^{\alpha}}{\cos \frac{\pi \alpha}{2}} \frac{1+\beta}{2}, \quad
\lambda_2 = \frac{\sigma^{\alpha}}{\cos \frac{\pi \alpha}{2}} \frac{1-\beta}{2},
\end{align*}
We use the following approximation: $it \sim (1-e^{-\im a t})/a$ as $a \to 0$ and  $-it = \sim (1-e^{\im a t})/a$ as $a \to 0$, therefore the characteristic function of $\alpha$-stable distribution can be approximated by a characteristic function of a discrete distribution as
$$\log f(t) \sim \log g(t,a) = -  \frac{\lambda_1}{a^{\alpha}} (1-e^{\im a t})^{\alpha}-  \frac{\lambda_2}{a^{\alpha}} (1-e^{-\im a t})^{\alpha}, \quad \text{as} \quad a \to 0.$$  
This distribution for $a = 1$ was introduced in \cite{slamova2012} as discrete stable distribution and it was shown there that $g(t,a)$ is a characteristic function only for $\alpha \in (0,1]$. From the construction of the approximation we see that $$\lim_{a\to 0} g(t,a) = f(t).$$ Discrete stable distribution has therefore the same behavior of tails as $\alpha$-stable distribution and it is again infinitely divisible.

We can obtain yet another discrete approximation of $\alpha$-stable distribution with Gaussian tails by tempering the tails of discrete stable distribution. Because discrete stable distribution is a compound Poisson distribution with intensity $\lambda_1 + \lambda_2$ and distribution of jumps with characteristic function $$h(t,a) = 1- \frac{\lambda_1}{\lambda_1+\lambda_2}(1-e^{\im a t})^{\alpha} - \frac{\lambda_2}{\lambda_1+\lambda_2}(1-e^{-\im a t})^{\alpha},$$ the L\'evy-Khintchine representation of discrete stable characteristic function takes the following form 
$$\log g(t,a) = \int_{-\infty}^{\infty} \left(e^{\im a t x} - 1\right) \nu(\dd x),$$ where $\nu(\dd x)$ is the L\'evy measure,
\begin{align*}
\nu(\dd x) = (\lambda_1 + \lambda_2) \sum_{k=-\infty}^{\infty} p_k \delta_{ak}(\dd x),\\
\intertext{where}
p_k = \left\{\begin{array}{ll} 
\frac{\lambda_1}{\lambda_1 + \lambda_2} (-1)^{k+1} \binom{\alpha}{k} & k>0\\
\frac{\lambda_2}{\lambda_1 + \lambda_2} (-1)^{k+1} \binom{\alpha}{|k|} & k<0\\
0 & k = 0,
\end{array}
\right.
\end{align*}
and $\delta_k$ is the Dirac measure, i.e.~$\delta_x(A) = 1$ if $x \in A$ and $0$ otherwise. The classical idea leading to tempered infinitely divisible distribution consists of exponential tempering of the corresponding L\'evy measure (\cite{rosinski}). We will use tempering function of the form $q(x) = e^{-\theta_1 x} \mathbf{1}_{x>0} + e^{-\theta_2 |x|} \mathbf{1}_{x<0}$. The tempered infinitely divisible distribution is then obtained by multiplying the L\'evy measure by this tempering function. As a result we obtain a distribution with characteristic function
\begin{multline*}
 \log g(t,a,\theta_1,\theta_2) = -\lambda_1 \left(1-e^{\im a t} e^{-\theta_1}\right)^{\alpha} - \lambda_2 \left(1-e^{-\im a t} e^{-\theta_2}\right)^{\alpha} \\ + \lambda_1 \left(1- e^{-\theta_1}\right)^{\alpha} + \lambda_2 \left(1- e^{-\theta_2}\right)^{\alpha}.
\end{multline*}

This characteristic function is an analytic function in the strip $\mathfrak{I}(t) \in (-\theta_2, \theta_1)$ and by the Raikov's theorem (\cite{linnik}) the tails are  $O(\exp(-b x))$, as $x \to \infty$, for all $b>0$. Therefore the tempered discrete stable distribution belong to the domain of normal attraction of Gaussian distribution. By the pre-limit theorem of \cite{klebanov_prelimit} we can show that for not too large values of $n$, the normalized sum $S_n = n^{-1/\alpha}(X_1 + \dots + X_n)$ behaves like $\alpha$-stable distribution.

\section{Third approximation}
Another way to find a discrete approximation of strictly stable distributions is to discretize its L\'evy (or spectral) measure. Stable distribution is an infinitely divisible distribution and as such has a L\'evy-Khintchine representation of its characteristic function. This representation takes the following form (see, for example, \cite[$\S$ 34]{zolotarev} or \cite{samorodnitsky})
\begin{equation}\label{LK_stable}
\log f(t) =  P \int_0^{\infty}\left(e^{\im t x} - 1 -\im t x \mathbf{1}_{|x|\leq 1}\right) \frac{\dd x}{x^{1+\alpha}} + Q \int_{-\infty}^{0}\left(e^{\im t x} - 1 -\im t x \mathbf{1}_{|x| \leq 1}\right) \frac{\dd x}{|x|^{1+\alpha}}, 
\end{equation}
where $P, Q >0$, $0 < \alpha < 2$. The L\'evy measure is $$U(\dd x) = \frac{P}{x^{1+\alpha}} \mathbf{1}_{x>0}(x) \dd x  + \frac{Q }{|x|^{1+\alpha}}\mathbf{1}_{x<0}(x) \dd x.$$ Every infinitely divisible random variable is a limit of compound Poisson random variables, and the L\'evy  measure $U(\dd x)$ express the intensity of jumps of size $x$. The term $\im t x \mathbf{1}_{|x|\leq 1}$ is the compensation of the small jumps that ensures that the integral converges. If we discretize the L\'evy measure we obtain discrete infinitely divisible distribution. This can be achieved by discretizing the integrals in \eqref{LK_stable}, as follows
\begin{multline}\label{LK_discrete}
\log f(t) \sim \log g(t,a) =  P \sum_{k=1}^{\infty}\left(e^{\im t a k} - 1\right) \frac{a}{(ak)^{1+\alpha}} \\
+ Q \sum_{k=-\infty}^{-1}\left(e^{\im t ak} - 1 \right) \frac{a}{|ak|^{1+\alpha}}, \quad \text{  as  } \quad a \to  0.
\end{multline}
Here we omit the compensation of small jumps. The distribution given by characteristic function $g(t,a)$ is still infinitely divisible, with the L\'evy measure given by 
$$V(\dd x) = \sum_{k=-\infty}^{\infty} P \frac{a}{(ak)^{1+\alpha}}\mathbf{1}_{k>0}(k) \delta_{ak}(\dd x) + Q \frac{a}{|ak|^{1+\alpha}}\mathbf{1}_{k<0}(k)\delta_{ak}(\dd x), \quad k \in \Z.$$ As a result, we obtain a distribution with characteristic function 
\begin{equation*}
\log g(t,a) = \frac{1}{a^{\alpha}} \left(P \mathrm{Li}_{1+\alpha}\left(e^{\im a t}\right) + Q \mathrm{Li}_{1+\alpha}\left(e^{-\im a t}\right) - (P+Q) \zeta(1+\alpha)\right),
\end{equation*}
where $\mathrm{Li}_{1+\alpha}(x)$ is the polylogarithm function and $\zeta(1+\alpha)$ is the Zeta function. It is interesting to note that $g$ is a characteristic function for all positive values of $\alpha$. We can rewrite this with $\sigma = (P+Q)$ and $\beta = \frac{P}{P+Q}$ to obtain
\begin{equation*}
g(t,a) = \exp \left\{\frac{\sigma}{a^{\alpha}} \left(\beta \mathrm{Li}_{1+\alpha}\left(e^{\im a t}\right) + (1-\beta) \mathrm{Li}_{1+\alpha}\left(e^{- \im a t}\right) - \zeta(1+\alpha)\right)\right\}.
\end{equation*}

By truncating the series in \eqref{LK_discrete} we obtain yet another approximation of $\alpha$-stable distribution by an entire characteristic function. 

\section*{Acknowledgements}
The paper was partially supported by Czech Science Foundation under the grants P402/12/12097 and P203/12/0665. The support by Mobility fund of Charles University in Prague and Karel Urb\'anek endowment fund is gratefully acknowledged.

\bibliography{mybib}{}

\end{document}